\newtheorem{theorem}{Theorem}
\newtheorem{corollary}{Corollary}
\newtheorem{lemma}{Lemma}
\newtheorem{remark}{Remark}
\newtheorem{conjecture}{Conjecture}
\newtheorem{definition}{Definition}
\def \T{\textup{T}}
\def \diag{\textup{diag~}}
\newcommand{\rmnum}[1]{\romannumeral #1}
\newcommand\restr[2]{{
		\left.\kern-\nulldelimiterspace 
		#1 
		\right|_{#2} 
}}
\newcommand{\Rmnum}[1]{\expandafter\@slowromancap\romannumeral #1@}
\title{Disproof of a conjecture on the main spectrum of generalized Bethe trees}
\author{\small Zhidan Yan\quad\quad Wei Wang\thanks{Corresponding author: wangwei.math@gmail.com}
\\
{\footnotesize School of Mathematics, Physics and Finance, Anhui Polytechnic University, Wuhu 241000, P. R. China}
}
\date{}
\begin{document}
 \maketitle

\begin{abstract}
	An eigenvalue of the adjacency matrix of a graph is said to be main if the all-ones vector is not orthogonal to its associated eigenspace. A generalized Bethe tree with $k$ levels is a rooted tree in which vertices at the same level have the same degree. Fran\c{c}a and Brondani [On the main spectrum of generalized Bethe trees, Linear Algebra Appl., 628 (2021) 56-71] recently conjectured that any generalized Bethe tree with $k$ levels has exactly $k$ main eigenvalues whenever $k$ is even. We disprove the conjecture by constructing a family of counterexamples for  even integers $k\ge 6$.
 \\

\noindent\textbf{Keywords}: Generalized Bethe tree;  main eigenvalue; divisor matrix; equitable partition.

\noindent
\textbf{AMS Classification}: 05C50
\end{abstract}
\section{Introduction}
\label{intro}
Let $G$ be a simple graph with vertex set $\{1,\ldots,n\}$.  The \emph{adjacency matrix} of $G$ is the $n\times n$ symmetric matrix $A=A(G)=(a_{i,j})$, where $a_{i,j}=1$ if $i$ and $j$ are adjacent;  $a_{i,j}=0$ otherwise. We often identify a graph $G$ with its adjacency matrix $A$. For example, the eigenvalues and the eigenspaces of $A$ are called the eigenvalues and the eigenspaces of $G$. Let $e_n$ denote the $n$-dimensional all-ones vector. An eigenvalue of $G$ is called a \emph{main eigenvalue}
if the associated eigenspace is not orthogonal to  $e_n$. The \emph{main spectrum} of $G$ is the set of all (distinct) main eigenvalues of $G$. The notion of main eigenvalue was introduced by Cvetkovi\'{c} \cite{cvetkovic1978} and has received considerable attention since then; see \cite{rowlinson2007} for a survey.

Let $k\ge 2$ and $d_1,d_{2},\ldots, d_{k-1}$ be $k-1$ integers of at least 2. A \emph{generalized Bethe tree} $\mathcal{B}(d_1,d_{2},\ldots,d_{k-1})$ with $k$ levels is a rooted tree in which all vertices at the  level $i$ have the degree $d_{i}$ for $i=1,2,\ldots,k-1$. If $d_2=d_3=\cdots=d_{k-1}=d+1$ and $d_1=d$, then the generalized Bethe tree $\mathcal{B}(d_1,d_{2},\ldots,d_{k-1})$ becomes an ordinary \emph{Bethe tree}, and is denoted by $\mathcal{B}_{d,k}$. If $d_1=d_2=\cdots=d_{k-1}=d$ then $\mathcal{B}(d_1,d_{2},\ldots,d_{k-1})$ is  denoted by $\mathcal{Q}_{d,k}$ and is called the \emph{quasi-regular complete tree} \cite{cvetkovi2008}. Figure 1 shows some examples of generalized Bethe trees.

 Note that a generalized Bethe tree with two levels is a star graph, which is known to have exactly two main eigenvalues. Fran\c{c}a and Brondani extended this result for both Bethe trees and quasi-regular complete trees.

\begin{theorem}[\cite{franca2021LAA}]\label{twotrees}
	For any $k\ge 3$ and $d\ge 2$, both $\mathcal{B}_{d,k}$ and $\mathcal{Q}_{d,k}$ have exactly $k$ main eigenvalues.
\end{theorem}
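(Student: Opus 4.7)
My plan is to reduce the problem, via the natural equitable partition by levels, to exhibiting $\mathbf{1}_k$ as a cyclic vector for the associated quotient matrix. For $G=\mathcal{B}(d_1,\ldots,d_{k-1})$, the level partition $\pi=\{V_1,\ldots,V_k\}$ is equitable, and its $k\times k$ divisor matrix $B$ is tridiagonal with $b_{i+1,i}=1$, $b_{1,2}=d_1$, and $b_{i,i+1}=d_i-1$ for $2\le i\le k-1$. Let $P$ denote the $n\times k$ characteristic matrix of $\pi$: $AP=PB$, $P\mathbf{1}_k=e_n$, and $\rank(P)=k$. Then $A^je_n=PB^j\mathbf{1}_k$, so the number of main eigenvalues of $G$ equals $\dim\Span\{\mathbf{1}_k,B\mathbf{1}_k,\ldots,B^{k-1}\mathbf{1}_k\}$. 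Since this dimension is at most $k$, the task reduces to showing it equals $k$, i.e.\ that $\mathbf{1}_k$ is a cyclic vector for $B$.

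Because every product $b_{i,i+1}\,b_{i+1,i}$ is positive, the diagonal similarity $\tilde B=D^{-1}BD$ with $D=\diag(|V_i|^{-1/2})$ turns $B$ into a symmetric irreducible tridiagonal matrix, so $B$ has $k$ distinct real eigenvalues. Cyclicity of $\mathbf{1}_k$ is therefore equivalent to the vector $w:=D^{-1}\mathbf{1}_k=(|V_i|^{1/2})_{i=1}^k$ having nonzero inner product with every eigenvector of $\tilde B$.

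For $\mathcal{B}_{d,k}$ one has $|V_i|=d^{i-1}$ and $\tilde B=\sqrt d\cdot A(P_k)$ (the scaled adjacency matrix of the path), whose eigenvectors are the sine vectors $(\tilde v_j)_i\propto\sin\!\bigl(ij\pi/(k+1)\bigr)$. A direct geometric-series computation gives
\[
\langle w,\tilde v_j\rangle\;\propto\;\frac{\sin\!\bigl(j\pi/(k+1)\bigr)\,\bigl(1-(-1)^j d^{(k+1)/2}\bigr)}{1-2\sqrt d\cos\!\bigl(j\pi/(k+1)\bigr)+d},
\]
which is nonzero for every $j=1,\ldots,k$, since $d\ge 2$ forces $d^{(k+1)/2}>1$. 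For $\mathcal{Q}_{d,k}$, $|V_1|=1$ and $|V_i|=d(d-1)^{i-2}$ for $i\ge 2$; the symmetrized $\tilde B$ agrees with the path-like matrix except that its first sub/super-diagonal entries are $\sqrt d$ rather than $\sqrt{d-1}$. Writing $\lambda=2\sqrt{d-1}\cos\psi$, the three-term recurrence forces $(\tilde v_j)_i\propto\sin\!\bigl((k+1-i)\psi_j\bigr)$ for $i\ge 2$, with a specific value at $i=1$ and $\psi_j$ determined by the secular equation $(d-1)\sin\!\bigl((k+1)\psi\bigr)=\sin\!\bigl((k-1)\psi\bigr)$; an analogous summation produces a closed form for $\langle w,\tilde v_j\rangle$ that one verifies to be nonzero whenever $\psi_j$ satisfies the secular equation.

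The principal obstacle is the quasi-regular calculation: the boundary perturbation at the root makes the eigenvectors not pure sines, and one must analyse both the trigonometric branch ($\psi_j\in(0,\pi)$) and any hyperbolic branch (imaginary $\psi_j$ with $|\lambda|>2\sqrt{d-1}$) arising from the perturbation. In each branch, non-vanishing of $\langle w,\tilde v_j\rangle$ reduces to showing that a specific combination of $\sin(k\psi_j)$ and $\sin\!\bigl((k+1)\psi_j\bigr)$ (or their hyperbolic analogues) cannot vanish together with the secular equation; this is verified by substituting the secular relation and applying elementary positivity bounds for $d\ge 2$.
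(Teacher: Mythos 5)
First, note that the paper you were asked to match gives no proof of Theorem \ref{twotrees}: it is quoted from \cite{franca2021LAA}. So your argument can only be judged on its own terms, though it runs on exactly the machinery this paper sets up (level partition, divisor matrix, Lemma \ref{basic}--Lemma \ref{equalmain}); your criterion $\langle w,\tilde v_j\rangle\neq 0$ with $w=(|V_i|^{1/2})_i$ is precisely condition (i) of Corollary \ref{twoequiv} rewritten in the symmetrized frame. Your reduction is correct: the number of main eigenvalues equals the dimension of the walk space of $(A,e_n)$, which equals that of $(B,e_k)$ because $A^je_n=PB^je_k$ and $P$ has full column rank; the diagonal conjugation to a Jacobi matrix gives simple eigenvalues; and the Bethe-tree computation is right, since with $r=\sqrt d$, $\theta_j=j\pi/(k+1)$ one indeed gets $\langle w,\tilde v_j\rangle=\sin\theta_j\,(1-(-1)^jd^{(k+1)/2})/(1-2\sqrt d\cos\theta_j+d)\neq 0$.

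The genuine gap is the quasi-regular case, which you yourself flag as ``the principal obstacle'': you state the correct secular equation $(d-1)\sin((k+1)\psi)=\sin((k-1)\psi)$ and the correct eigenvector shape ($\tilde v_1=\sqrt{(d-1)/d}\,\sin(k\psi)$, $\tilde v_i=\sin((k+1-i)\psi)$ for $i\ge 2$), but the decisive non-vanishing of $\langle w,\tilde v_j\rangle$ is only asserted (``one verifies''), and the hyperbolic branch is left unresolved --- yet this is where all the remaining work of the theorem lies, so as written that half is a plan rather than a proof. The gap is fillable, and cleanly: carrying out your geometric summation and substituting $\sin((k-1)\psi)=\tfrac{2(d-1)\cos\psi}{d}\sin(k\psi)$ (a consequence of the secular equation), every $\sin(k\psi)$ term cancels because $(d-1)+1-d=0$, leaving $\langle w,\tilde v\rangle=\sqrt d\,(d-1)^{k/2}\sin\psi/\bigl(d-2\sqrt{d-1}\cos\psi\bigr)$, which is nonzero since $\sin\psi\neq 0$ and $|\lambda|=2\sqrt{d-1}\,|\cos\psi|<d$. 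As for the hyperbolic branch, it is in fact empty: for $\lambda=\pm2\sqrt{d-1}\cosh\phi$ with $\phi>0$ the same recurrence gives the condition $(d-1)\sinh((k+1)\phi)=\sinh((k-1)\phi)$, impossible for $d\ge 2$ (equivalently, a finite tree of maximum degree $d$ has spectral radius below $2\sqrt{d-1}$, and bipartiteness handles the negative side). With these two points written out, your argument becomes a complete and self-contained proof, independent of the one in \cite{franca2021LAA}.
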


\begin{figure}
	\centering
	\includegraphics[height=3cm]{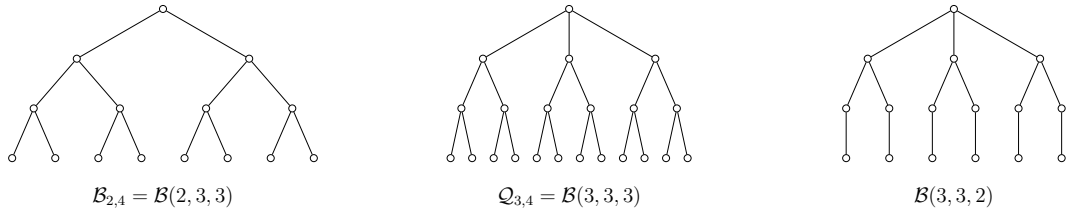}
	\caption{Generalized Bethe trees $\mathcal{B}_{2,4}$, $\mathcal{Q}_{3,4}$ and $\mathcal{B}(3,3,2)$.}
\end{figure}

It is known that Theorem \ref{twotrees} can not be  extended to all generalized Bethe trees. Indeed, for any positive integer $\alpha\ge 2$, the generalized Bethe tree $\mathcal{B}(\alpha^2-\alpha+1,\alpha)$ has exactly two main eigenvalues while it has three levels; see \cite{hou2005}. Fran\c{c}a and Brondani \cite{franca2021LAA} conjectured that such an inconsistency will never happen if the number of levels of the concerned generalized Bethe tree is even.
\begin{conjecture}[\cite{franca2021LAA}]\label{main}
Let $k$ be even. Every generalized Bethe tree with  $k$ levels has exactly $k$ main eigenvalues.
\end{conjecture}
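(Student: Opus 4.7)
The plan is to exploit the natural equitable partition $\pi$ of a generalized Bethe tree $T=\mathcal{B}(d_1,\ldots,d_{k-1})$ whose cells are the level sets $V_1,\ldots,V_k$, with $V_1=\{\text{root}\}$ and $V_k$ the leaf set. Because vertices on a common level share the same degree and have all their neighbours on adjacent levels, $\pi$ is equitable; let $B$ denote its $k\times k$ tridiagonal quotient matrix (subdiagonal all $1$'s, superdiagonal $d_1,d_2-1,\ldots,d_{k-1}-1$, zero diagonal). It is standard (Godsil-Royle) that every main eigenvalue of $T$ is an eigenvalue of $B$, so $T$ has at most $k$ main eigenvalues, and Conjecture \ref{main} reduces to showing, for even $k$, that (i) $B$ has $k$ distinct eigenvalues and (ii) each of them is actually main for $T$.

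Set $D=\diag(n_1,\ldots,n_k)$ with $n_i=|V_i|$. The elementary identity $n_i B_{i,i+1}=n_{i+1}$ (verified level by level from $n_1=1$, $n_2=d_1$, $n_{i+1}=n_i(d_i-1)$ for $i\ge 2$) makes $J:=D^{1/2}BD^{-1/2}$ a symmetric Jacobi matrix with strictly positive off-diagonals $\sqrt{d_1},\sqrt{d_2-1},\ldots,\sqrt{d_{k-1}-1}$. Hence $J$, and therefore $B$, has $k$ simple real eigenvalues $\lambda_1,\ldots,\lambda_k$, which settles (i). For (ii), lifting an eigenvector $v$ of $B$ to the level-constant vector $Pv$ on $T$ (with $P$ the characteristic matrix of $\pi$) gives $\langle Pv,e_n\rangle=\sum_i n_i v_i$. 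After the symmetrisation $u=D^{1/2}v$ and $w:=D^{1/2}e_k=(\sqrt{n_1},\ldots,\sqrt{n_k})^{\top}$, main-ness of every eigenvalue of $B$ becomes the single statement that $w$ is a \emph{cyclic} vector for $J$. Equivalently, if $p_0,p_1,\ldots,p_{k-1}$ are the orthonormal polynomials generated by the three-term recurrence of $J$ (so that the $\lambda_j$-eigenvector of $J$ is $(p_0(\lambda_j),\ldots,p_{k-1}(\lambda_j))^{\top}$), one must prove that the polynomial $Q(\lambda):=\sum_{i=1}^{k}\sqrt{n_i}\,p_{i-1}(\lambda)$ has no common root with the characteristic polynomial $\chi_J(\lambda)$.

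The main obstacle is exactly this non-vanishing. Cyclicity of $w$ would be automatic if $w$ were concentrated on a single coordinate, because a Jacobi matrix with nonzero off-diagonals is always cyclic from $e_1$ and from $e_k$; here, however, all coordinates of $w$ are strictly positive, and one must genuinely control the full combination $Q(\lambda_j)$ in terms of the parameters $d_1,\ldots,d_{k-1}$. My intended route is to feed the Jacobi three-term recurrence back into $Q$, obtain a closed expression governed by the partial products $n_i$, and analyse its zero set against $\chi_J$. Parity of $k$ should enter through an alternation in this recurrence that for odd $k$ permits arithmetic cancellations (as witnessed by the three-level family $\mathcal{B}(\alpha^2-\alpha+1,\alpha)$, whose peculiar arithmetic relation is precisely what balances the middle component of an eigenvector against $w$), but which for even $k$ should leave a strictly nonzero residual term. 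Establishing this parity-sensitive non-vanishing \emph{uniformly} over all admissible sequences $d_1,\ldots,d_{k-1}\ge 2$ is the crux of the problem, and the step where any genuine proof attempt must succeed or fail.
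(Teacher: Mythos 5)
Your framework is the right one --- the level partition is equitable, the quotient matrix $B$ is tridiagonal, the similarity $D^{1/2}BD^{-1/2}$ is a Jacobi matrix with positive off-diagonals so all $k$ eigenvalues of $B$ are simple and real, and main-ness of the tree's eigenvalues reduces exactly to the non-vanishing of your $Q(\lambda)=\sum_i\sqrt{n_i}\,p_{i-1}(\lambda)$ at the eigenvalues of $B$ (equivalently, to $w=D^{1/2}e_k$ being cyclic for $J$). But the step you correctly identify as the crux is not merely unfinished: it is false, and the conjecture itself is false for every even $k\ge 6$. This paper is a disproof, not a proof. The counterexample is $G_k=\mathcal{B}(5,k-3,5,3,\underbrace{2,\ldots,2}_{k-5})$, i.e.\ $d_1=5$, $d_2=k-3$, $d_3=5$, $d_4=3$, $d_5=\cdots=d_{k-1}=2$. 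For this family the vector $\xi=(1,-2,-1,2(k-3),-4(k-4),4(k-5),\ldots,-8,4)^\T$ satisfies $B^\T\xi=-2\xi$ and $e_k^\T\xi=0$; since $-2$ is a simple eigenvalue of $B$ (by your own Jacobi argument), its entire $B^\T$-eigenspace is orthogonal to $e_k$, so in your language $Q(-2)=0$ and $w$ fails to be cyclic. By Teranishi's lemma the tree and its divisor have the same main spectrum, so $G_k$ has at most $k-1$ main eigenvalues.

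Concretely, then, the gap is the unproved (and unprovable) uniform non-vanishing claim: your parity heuristic --- that an alternation in the three-term recurrence forces a nonzero residual when $k$ is even --- does not survive contact with the parameters $d_1,\ldots,d_{k-1}$, which can be tuned (here $d_2=k-3$ against the tail of $2$'s) to make the residual vanish. The even/odd distinction does no work beyond $k=4$, which is the only case of the conjecture still open. If you want to salvage something from your approach, the productive direction is the reverse one: use the explicit orthogonal-polynomial expression for $Q$ to \emph{search} for parameter choices with $Q(\lambda_j)=0$, which is essentially how a counterexample like the above is found.
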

Note that Conjecture \ref{main} trivially holds for $k=2$. The main aim of this note is to show that Conjecture \ref{main} fails for each even $k\ge 6$. Indeed, we give a family of counterexamples to  Conjecture \ref{main}.
\begin{theorem}\label{counttree}
Let $k\ge 6$ be even. Then $\mathcal{B}(5,k-3,5,3,\underbrace{2,2,\ldots,2}_{k-5})$ with $k$ levels has at most $k-1$ main eigenvalues.
\end{theorem}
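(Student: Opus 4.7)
The plan is to work with the equitable level partition of $\mathcal{B}(5,k-3,5,3,2,\ldots,2)$ and its $k\times k$ tridiagonal divisor matrix $B$. Since $B$ has nonzero sub- and super-diagonal entries, it has $k$ distinct real eigenvalues, and from the intertwining $A^je_n=PB^je_k$ (where $P$ is the characteristic matrix of the partition and $e_k\in\mathbb R^k$ is the all-ones vector) a standard argument (see e.g.\ \cite{cvetkovic1978,rowlinson2007}) shows that the number of main eigenvalues of the tree equals the number of eigenvalues $\mu$ of $B$ whose left eigenvector $u$ satisfies $e_k^Tu\ne 0$. To prove Theorem~\ref{counttree} it therefore suffices to exhibit a single eigenvalue of $B$ whose left eigenvector has entries summing to zero.

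My candidate is $\mu=-2$. Normalising $u_1=1$, the equations $(u^TB)_j=\mu u_j$ yield a three-term linear recurrence whose coefficients track the branching factors $5,k-3,5,3,1,1,\ldots,1$; at $\mu=-2$ the first few terms work out to $u_1=1,\ u_2=-2,\ u_3=-1,\ u_4=2k-6,\ u_5=-4(k-4),\ u_6=4(k-5)$. For $j\ge 6$ the recurrence simplifies to $u_{j+1}=-2u_j-u_{j-1}$, whose characteristic polynomial $(x+1)^2$ has $-1$ as a double root. The general solution therefore has the form $u_j=(-1)^j(\alpha+\beta j)$, and fitting it to $u_5,u_6$ gives the explicit closed form $u_j=4(-1)^j(k+1-j)$, valid for all $j\ge 5$.

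Two verifications then finish the proof. First, the remaining eigenvalue equation---from column $k$ of $B$---reads $u_{k-1}=\mu u_k$; extending the tail recurrence one step further gives $u_{k+1}=4(-1)^{k+1}(k+1-(k+1))=0$, which is precisely this boundary relation, confirming that $-2$ is indeed an eigenvalue of $B$. Second, $e_k^Tu=\sum_{j=1}^k u_j$ must vanish: the first four entries contribute $2k-8$, and a short reindexing and pairing of consecutive terms in the alternating tail sum $\sum_{j=5}^k 4(-1)^j(k+1-j)$ yields $-2(k-4)$, so the two contributions cancel. The only point at which the hypothesis "$k$ even" is used is this pairing in the tail sum: for odd $k$ an unpaired term would survive and $\sum u_j$ would be nonzero, consistent with Theorem~\ref{counttree} being stated only for even~$k$. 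This parity issue is the only real delicacy in the argument; once verified, $-2$ is exhibited as a non-main eigenvalue of the tree, leaving at most $k-1$ main eigenvalues.
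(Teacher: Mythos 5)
Your proposal is correct and takes essentially the same approach as the paper: reduce to the divisor matrix $B$ of the level (equitable) partition and exhibit, for the eigenvalue $-2$, the left eigenvector $u=(1,-2,-1,2k-6,-4(k-4),4(k-5),\ldots,-8,4)^\T$ with $e_k^\T u=0$, which is exactly the paper's vector $\xi$, with the parity of $k$ entering only in the vanishing of the alternating tail sum. The only (minor) difference is how the reduction is justified: the paper invokes Teranishi's lemma that $G$ and $G/\Pi$ have the same main spectrum and argues by contradiction without needing simplicity of the eigenvalues of $B$, while you use the walk-matrix/Krylov argument together with the simplicity of the spectrum of the tridiagonal $B$ (valid here because the products of corresponding sub- and super-diagonal entries are positive, which is the condition actually needed).
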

The counterexample $\mathcal{B}(5,3,5,3,2)$ constructed in Theorem \ref{counttree} for $k=6$ is shown in Figure 2, where the solid circle  represents the root vertex.
\begin{figure}
	\centering
	\includegraphics[height=6cm]{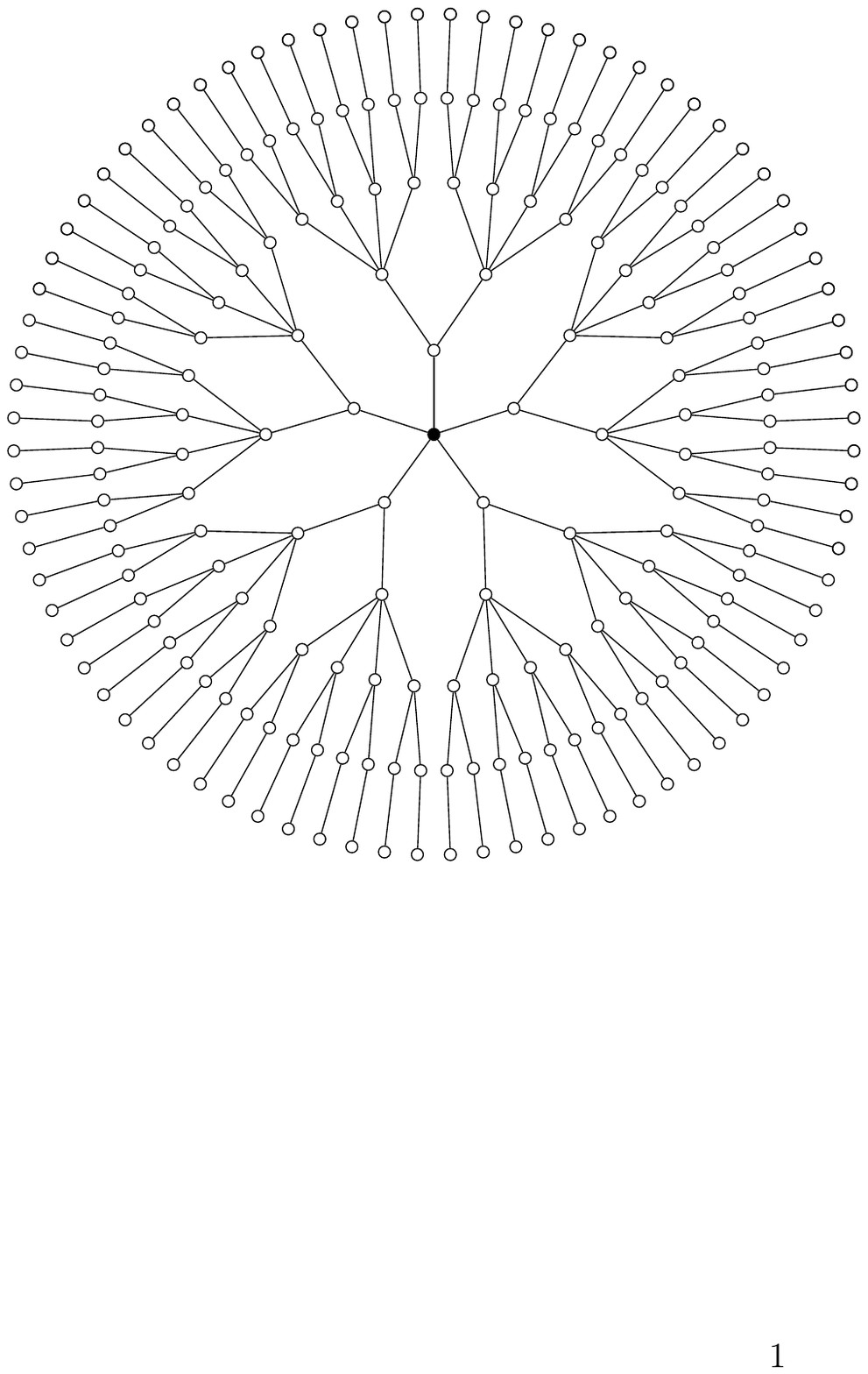}
	\caption{The counterexample $\mathcal{B}(5,3,5,3,2)$.}
\end{figure}
The proof of Theorem \ref{counttree} will be presented in the next section using the tool of equitable partitions. We remark that  Conjecture \ref{main} is still open for the only remaining case that $k=4$.

\section{Proof of Theorem \ref{counttree}}

Let $\Pi=\{C_1,C_2,\ldots,C_k\}$ be a partition of the vertex set $V(G)$. We say that $\Pi$ is an \emph{equitable partition} of $G$ if for all $i,j\in\{1,2,\ldots,k\}$, there is a constant $b_{ij}$ such that every vertex in $C_i$ has exactly $b_{ij}$ neighbors in $V_j$. The \emph{divisor} (or \emph{quotient graph}) of $G$ with respect to $\Pi$, denoted by $G/\Pi$, is the directed  multigraph with $k$ vertices $C_1,C_2,\ldots,C_k$ and $b_{ij}$ arcs from $C_i$ to $C_j$. The $k\times k$ matrix $(b_{ij})$ is called the \emph{divisor matrix} of $\Pi$, denoted by $A(G/\Pi)$. The \emph{characteristic matrix} of $\Pi$, denoted by $C$,  is the $n\times k$ matrix with the characteristic vectors of the cells $C_i$'s as its columns. Note that $C^\T C$ is the $k\times k$ diagonal matrix $\diag(|C_1|,|C_2|,\ldots,|C_k|)$, which is clearly invertible as each $|C_i|$ is nonzero.

The following lemma collects some basic results on divisors.
\begin{lemma}[\cite{cvetkovic2010}]\label{basic}
Let $\Pi=\{C_1, C_2,\ldots,C_k\}$ be an equitable partition of the graph $G$, with characteristic matrix $C$. Let $A=A(G)$ and $B=A(G/\Pi)$. Then\\
(\rmnum{1})   AC=CB;\\
(\rmnum{2})  $\det(xI-B)$ divides $\det(xI-A)$ and hence each eigenvalue of $B$ is an eigenvalue of $A$;
(\rmnum{3})  Each main eigenvalue of $A$ is an eigenvalue of $B$.
\end{lemma}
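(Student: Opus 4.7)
The plan is to exploit the level partition $\Pi$ of the tree $T := \mathcal{B}(5,k-3,5,3,2,\dots,2)$. Because vertices on the same level play identical structural roles, $\Pi$ is equitable, with characteristic matrix $C$ and $k\times k$ divisor matrix $B$; the latter is tridiagonal with zero diagonal, subdiagonal of all $1$'s, and superdiagonal $(5,\,k-4,\,4,\,2,\,1,\,1,\dots,1)$ read off from the degree sequence. By Lemma~\ref{basic}(iii) every main eigenvalue of $A(T)$ lies in $\mathrm{spec}(B)$, so it suffices to exhibit one element of $\mathrm{spec}(B)$ that fails to be a main eigenvalue of $A(T)$; this will drop the bound from $k$ to $k-1$. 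My candidate is $\lambda=-2$.

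First I construct an eigenvector $w\in\mathbb{R}^k$ of $B$ for $-2$. On rows $5,\dots,k-1$ the equation $Bw=-2w$ reduces to $w_{i-1}+w_{i+1}=-2w_i$, whose characteristic polynomial $t^2+2t+1$ has double root $-1$; combining with the leaf-row boundary $w_{k-1}=-2w_k$ fixes $w_i=(-1)^i(i-k-1)$ for $5\le i\le k$, up to scaling. Back-substituting the equations at rows $4,3,2,1$ towards the root then determines $w_4,w_3,w_2,w_1$ as simple polynomials in $k$, and the topmost constraint $5w_2=-2w_1$ turns out to be automatically satisfied, so $-2\in\mathrm{spec}(B)$.

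Next I verify $u^\T w=0$, where $u=C^\T e_n=(|C_1|,\dots,|C_k|)^\T$ encodes the cell sizes $|C_1|=1$, $|C_2|=5$, $|C_3|=5(k-4)$, $|C_4|=20(k-4)$, and $|C_i|=40(k-4)$ for $5\le i\le k$. The contribution from rows $1,\dots,4$ collapses to $-20(k-4)^2$. The tail contribution is $40(k-4)\sum_{i=5}^k(-1)^i(i-k-1)$; pairing consecutive terms yields pairs each summing to $1$, and since $k$ is even the pairing exhausts the tail in $(k-4)/2$ pairs, giving a tail contribution of $+20(k-4)^2$ that cancels the top contribution exactly.

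Finally, because $B$ is irreducible tridiagonal every eigenvalue of $B$ is simple, so its $(-2)$-eigenspace equals $\mathrm{span}(w)$. The relation $AC=CB$ makes $\mathrm{col}(C)$ an $A$-invariant subspace containing $e_n$; by symmetry of $A$ the orthogonal complement $\mathrm{col}(C)^\perp$ is also $A$-invariant and automatically orthogonal to $e_n$. Decomposing the $(-2)$-eigenspace of $A$ along this orthogonal sum, the $\mathrm{col}(C)$-summand equals $\mathrm{span}(Cw)$, which is orthogonal to $e_n$ since $e_n^\T Cw=u^\T w=0$; hence the entire $(-2)$-eigenspace of $A$ lies in $e_n^\perp$, and $-2$ is not main. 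With Lemma~\ref{basic}(iii) this gives at most $|\mathrm{spec}(B)|-1\le k-1$ main eigenvalues. The main obstacle is guessing the anti-symmetric shape $w_i=(-1)^i(i-k-1)$ for the tail and verifying that the four topmost rows---whose superdiagonal entries $5,\,k-4,\,4,\,2$ are genuinely different from the uniform tail $1$'s---close up consistently at $\lambda=-2$, together with the delicate cancellation $u^\T w=0$ engineered into the specific degree sequence of $T$.
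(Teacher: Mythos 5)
Your proposal does not prove the statement it was supposed to prove. The statement is Lemma~\ref{basic}, the general fact about an arbitrary graph $G$ with an equitable partition $\Pi$: (\rmnum{1}) $AC=CB$, (\rmnum{2}) $\det(xI-B)\mid\det(xI-A)$, and (\rmnum{3}) every main eigenvalue of $A$ is an eigenvalue of $B$. What you have written instead is a proof of Theorem~\ref{counttree} for the specific tree $\mathcal{B}(5,k-3,5,3,2,\ldots,2)$; indeed you explicitly \emph{invoke} Lemma~\ref{basic}(\rmnum{3}) as a premise in your first paragraph, so the argument is circular if read as a proof of that lemma. None of the three assertions (\rmnum{1})--(\rmnum{3}) is established anywhere in your text. (The paper itself cites this lemma from the literature rather than proving it, but a proof attempt still has to address these three claims.)

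For the record, the intended argument is short. For (\rmnum{1}), the $(v,j)$ entry of $AC$ is the number of neighbours of $v$ in $C_j$, which equals $b_{ij}$ whenever $v\in C_i$ by equitability, and this is exactly the $(v,j)$ entry of $CB$. For (\rmnum{2}), since $C$ has full column rank, $AC=CB$ says that the column space of $C$ is $A$-invariant and that $B$ represents the restriction of $A$ to it in the basis given by the columns of $C$; hence the characteristic polynomial of $B$ divides that of $A$. For (\rmnum{3}), transpose $AC=CB$ to get $C^\T A=B^\T C^\T$; if $A\xi=\lambda\xi$ with $e_n^\T\xi\neq 0$, then $B^\T(C^\T\xi)=\lambda C^\T\xi$ and $e_k^\T C^\T\xi=(Ce_k)^\T\xi=e_n^\T\xi\neq 0$, so $C^\T\xi$ is a nonzero eigenvector of $B^\T$ and $\lambda$ is an eigenvalue of $B$ (this is precisely the computation the paper carries out inside the proof of Lemma~\ref{equalmain}). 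As a separate remark, your argument does appear to be a essentially sound proof of Theorem~\ref{counttree} --- your eigenvector of $B$ is $D^{-1}\xi$ up to scaling, where $\xi$ is the paper's eigenvector of $B^\T$, and your orthogonality check $u^\T w=0$ is equivalent to the paper's $e_k^\T\xi=0$ --- but it answers a different question from the one posed.
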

\begin{lemma}\label{BBt}
	Using the notations of Lemma \ref{basic}, and letting $D=C^\T C=\diag(|C_1|,|C_2|,\ldots,|C_k|)$, we have $D BD^{-1}=B^\T$.
\end{lemma}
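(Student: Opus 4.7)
My plan is to derive the identity directly from part (i) of Lemma \ref{basic}, using the fact that the adjacency matrix $A$ is symmetric.

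Starting from $AC=CB$, I would take the transpose of both sides to obtain $C^\T A^\T = B^\T C^\T$, and then exploit $A^\T=A$ to rewrite this as $C^\T A = B^\T C^\T$. Right-multiplying by $C$ produces $C^\T A C = B^\T C^\T C = B^\T D$. On the other hand, grouping the product differently and reusing $AC=CB$ gives $C^\T A C = C^\T (CB) = (C^\T C)B = DB$. Comparing the two expressions for $C^\T AC$ yields $DB = B^\T D$, and multiplying on the right by $D^{-1}$ (which exists since each $|C_i|>0$) delivers the claimed identity $DBD^{-1}=B^\T$.

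As a sanity check, one can interpret the entrywise content combinatorially: the $(i,j)$-entry of $DBD^{-1}=B^\T$ is the equality $|C_i|\,b_{ij}=|C_j|\,b_{ji}$, which simply counts the number of edges between the cells $C_i$ and $C_j$ from each side. This double-counting perspective could be used as an alternative proof, but the matrix argument above is cleaner and uses exactly the machinery already introduced in Lemma \ref{basic}.

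I do not anticipate a real obstacle here; the only thing to be careful about is invoking the symmetry of $A$ at the right moment and confirming invertibility of $D$, which is immediate from the assumption that every cell is nonempty.
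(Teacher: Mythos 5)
Your argument is correct, but it is not the route the paper takes. You derive $DB=B^\T D$ from Lemma \ref{basic}(\rmnum{1}): transposing $AC=CB$, using the symmetry of $A$ to get $C^\T A=B^\T C^\T$, and then evaluating $C^\T AC$ in two ways ($B^\T D$ versus $DB$) before right-multiplying by $D^{-1}$. The paper instead works entirely at the level of the divisor matrix: it computes the $(i,j)$-entry of $DBD^{-1}$ as $b_{ij}|C_i|/|C_j|$ and invokes the double-counting identity $b_{ij}|C_i|=b_{ji}|C_j|$ (both sides count the edges between $C_i$ and $C_j$), which is exactly the ``sanity check'' you mention only in passing. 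The trade-off: your matrix computation reuses the machinery of Lemma \ref{basic} and the symmetry of $A$, making the lemma a formal consequence of $AC=CB$, while the paper's entrywise argument is more elementary and self-contained, needing nothing beyond the definition of an equitable partition. Both proofs are complete; your handling of the invertibility of $D$ is also fine.
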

\begin{proof}
	Let $b'_{ij}$ and $b_{ij}$ denote the $(i,j)$-th entry of $D BD^{-1}$ and $B$, respectively. Noting that $D^{-1}=\diag(\frac{1}{|C_1|},\frac{1}{|C_2|},\ldots,\frac{1}{|C_k|})$, we see that
		$$b'_{ij}=\frac{b_{ij}|C_i|}{|C_j|}.$$	
As $\Pi$ is an equitable partition, we see that $b_{ij}|C_i|$ equals $b_{ji}|C_{j}|$ since both count the number of edges with one end in $C_i$ and the other in $C_j$. Thus $b'_{ij}=b_{ji}$ and the lemma follows.
\end{proof}
\begin{corollary}\label{twoequiv}
	Using the notations of Lemma \ref{basic}, and letting $\lambda$ be an eigenvalue of $B$ (or equivalently, $B^\T$), the following two statements are equivalent:\\
	(\rmnum{1}) $B$ has an eigenvector associated with $\lambda$ which is not orthogonal to the vector $(|C_1|,\ldots,|C_k|)^\T$.\\
	(\rmnum{2}) $B^\T$ has an eigenvector associated with $\lambda$ which is not orthogonal to $e_k$.\\
\end{corollary}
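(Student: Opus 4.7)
The plan is to exploit Lemma~\ref{BBt} directly, using the diagonal conjugation $D B D^{-1} = B^{\T}$ to transport eigenvectors of $B$ to eigenvectors of $B^{\T}$ and to match up the two orthogonality conditions in a single line.

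First I would establish a bijection between the $\lambda$-eigenspaces of $B$ and $B^{\T}$. If $v$ satisfies $Bv=\lambda v$, then by Lemma~\ref{BBt},
\[
B^{\T}(Dv) \;=\; (DBD^{-1})(Dv) \;=\; D(Bv) \;=\; \lambda (Dv),
\]
so $Dv$ lies in the $\lambda$-eigenspace of $B^{\T}$; conversely, given an eigenvector $w$ of $B^{\T}$ for $\lambda$, the vector $D^{-1}w$ is an eigenvector of $B$ for $\lambda$. Since $D$ is invertible, $v\mapsto Dv$ is a linear isomorphism between the two eigenspaces.

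Next I would translate the orthogonality conditions through this isomorphism. Writing the vector of cell sizes as $De_k=(|C_1|,\ldots,|C_k|)^{\T}$ and using that $D$ is symmetric,
\[
\bigl\langle v,\, (|C_1|,\ldots,|C_k|)^{\T}\bigr\rangle \;=\; \langle v,\, De_k\rangle \;=\; \langle Dv,\, e_k\rangle.
\]
Hence $v$ is not orthogonal to $(|C_1|,\ldots,|C_k|)^{\T}$ if and only if $Dv$ is not orthogonal to $e_k$. Combining this with the eigenspace bijection of the previous step immediately gives the equivalence of (i) and (ii).

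There is no real obstacle here: the only mildly subtle point is recognizing that the cell-size vector is precisely $De_k$, which is what makes the diagonal similarity in Lemma~\ref{BBt} convert the two inner products into one another. Once that identification is made, the proof is essentially a two-line computation.
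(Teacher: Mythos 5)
Your proposal is correct and follows essentially the same route as the paper: both use Lemma~\ref{BBt} to map a $\lambda$-eigenvector $v$ of $B$ to the eigenvector $Dv$ of $B^\T$ (and back via $D^{-1}$), and both observe that the cell-size vector is $De_k$, so the two non-orthogonality conditions translate into one another. The paper just writes out the two directions separately rather than packaging them as a bijection plus a single inner-product identity, but the substance is identical.
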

\begin{proof}
	Let $\xi$ be a vector such that $B\xi=\lambda\xi$ and $(|C_1|,\ldots,|C_k|)\xi\neq 0$. Write  $D=C^\T C$. Then by Lemma \ref{BBt}, we see that $D B=B^\T D$ and hence $B^\T D \xi=D B\xi=\lambda D \xi$. Noting that $e_k^\T D=(|C_1|,\ldots,|C_k|)$, we have $e_k^\T(D \xi)=(|C_1|,\ldots,|C_k|)\xi\neq 0$. This means that $D \xi$ is an eigenvector of $B^\T$ associated with $\lambda$ which is not orthogonal to $e_k$. Thus,  (\rmnum{1}) implies (\rmnum{2}).
	
	Conversely, 	let $\eta$ be a vector such that $B^\T \eta=\lambda\eta$ and $e_k^\T\eta\neq 0$. Similarly, $BD^{-1}=D^{-1}B^\T$ and hence $D^{-1}\eta$ is an eigenvector of $B$ associated with $\lambda$. Moreover, we have $(|C_1|,\ldots,|C_k|)D^{-1}\eta=e_k^\T\eta\neq 0$. Thus (\rmnum{2}) implies (\rmnum{1}) and the proof is complete.
\end{proof}
The following definition was first introduced by Teranishi \cite{teranishi2001}.
\begin{definition}[\cite{teranishi2001}] \normalfont
	Let  $\Pi=\{C_1, C_2,\ldots,C_k\}$ be an equitable partition of the graph $G$. An eigenvalue $\lambda$ of the divisor matrix $A(G/\Pi)$ is called a \emph{main eigenvalue} of $G/\Pi$ if the associated eigenspace of $(A(G/\Pi))^\T$ is not orthogonal to the all-ones vector $e_k$.	
\end{definition}
\begin{remark}\normalfont
	The original definition of main eigenvalues of divisors given in \cite{teranishi2001} requires both conditions as described in Corollary \ref{twoequiv}. But as these two conditions are indeed equivalent, the current simplified definition is essentially equivalent to the original one.
\end{remark}
We need the following key lemma due to Teranishi \cite{teranishi2001}, which  refines the last two assertions of Lemma \ref{basic}. We give a simpler proof for the convenience of readers.
\begin{lemma}[\cite{teranishi2001}] \label{equalmain}
	Let $G$ be a graph with an equitable partition $\Pi=\{C_1,\ldots,C_k\}$. Then $G$ and $G/\Pi$ have the same main spectrum.
\end{lemma}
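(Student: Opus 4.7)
The plan is to exploit the intertwining relation $AC=CB$ from Lemma~\ref{basic}(\rmnum{1}) together with two elementary but crucial identities involving the characteristic matrix $C$: namely $Ce_k=e_n$ (because summing the characteristic vectors of the cells yields the all-ones vector on $V(G)$) and $C^{\T}e_n=(|C_1|,\ldots,|C_k|)^{\T}$. With these in hand, the two inclusions of main spectra become short matrix manipulations.

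For the forward inclusion, suppose $\lambda$ is a main eigenvalue of $G$, witnessed by an eigenvector $v$ with $Av=\lambda v$ and $e_n^{\T}v\neq 0$. Transposing $AC=CB$ gives $B^{\T}C^{\T}=C^{\T}A$, so $B^{\T}(C^{\T}v)=C^{\T}(Av)=\lambda\,C^{\T}v$. Moreover, $e_k^{\T}(C^{\T}v)=(Ce_k)^{\T}v=e_n^{\T}v\neq 0$, which forces $C^{\T}v\neq 0$. Thus $C^{\T}v$ is an eigenvector of $B^{\T}$ associated with $\lambda$ that is not orthogonal to $e_k$, so $\lambda$ is a main eigenvalue of $G/\Pi$.

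For the reverse inclusion, suppose $\lambda$ is a main eigenvalue of $G/\Pi$. Invoking Corollary~\ref{twoequiv} converts the standard definition into the more convenient form: there exists $\xi$ with $B\xi=\lambda\xi$ and $(|C_1|,\ldots,|C_k|)\,\xi\neq 0$. Now form $C\xi$. Using $AC=CB$ directly, $A(C\xi)=C(B\xi)=\lambda\,C\xi$. Also, $e_n^{\T}(C\xi)=(C^{\T}e_n)^{\T}\xi=(|C_1|,\ldots,|C_k|)\,\xi\neq 0$, which again guarantees $C\xi\neq 0$. Hence $C\xi$ is an eigenvector of $A$ associated with $\lambda$ that is not orthogonal to $e_n$, and $\lambda$ lies in the main spectrum of $G$.

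There is no real obstacle here; the only subtlety is making sure the candidate eigenvectors $C^{\T}v$ and $C\xi$ are genuinely nonzero, but this is automatic from the two non-orthogonality identities above. The proof is essentially one-line in each direction once Corollary~\ref{twoequiv} is available to symmetrize the definition of a main eigenvalue of $G/\Pi$; this is precisely why Corollary~\ref{twoequiv} was set up in advance.
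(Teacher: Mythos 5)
Your proposal is correct and follows essentially the same route as the paper: transpose the intertwining relation $AC=CB$ and use $Ce_k=e_n$ for one inclusion, then invoke Corollary~\ref{twoequiv} and use $e_n^{\T}C=(|C_1|,\ldots,|C_k|)$ for the other. The only (harmless) addition is your explicit remark that the candidate eigenvectors are nonzero, which the paper leaves implicit.
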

\begin{proof}
	Let $A=A(G)$, $B=A(G/\Pi)$ and $C$ be the characteristic matrix of $\Pi$. By Lemma \ref{basic}(\rmnum{1}), we have
	\begin{equation}\label{basiceq}
AC=CB.
	\end{equation}
	Let $\lambda$ be a main eigenvalue of $G$.  Then there exists a vector $\xi\in \mathbb{R}^{n}$ such that $A \xi=\lambda\xi$ and $e_n^\T \xi\neq 0$. Taking transpose for both sides of \eqref{basiceq} and noting that $A$ is symmetric, we have $C^\T A=B^\T C^\T$ and hence $B^\T C^\T\xi=C^\T A\xi=\lambda C^\T\xi$. Write $\eta=C^\T\xi$. Then $B^\T \eta=\lambda \eta$. Moreover, as $Ce_k=e_n$ and $e_n^\T \xi\neq 0$, we have
	 	$$e_k^\T \eta =e_k^\T (C^\T \xi)=(Ce_k)^\T \xi=e_n^\T \xi\neq 0.$$
This indicates  that $\lambda$ is a main eigenvalue of $G/\Pi$.

	Conversely, let $\lambda$ be a main eigenvalue of $G/\Pi$. By Corollary  \ref{twoequiv}, there exists a $\eta\in\mathbb{R}^k$ such that $B\eta =\lambda \eta$ and $(|C_1|,\ldots, |C_k|)^\T \eta\neq 0$. Write $\xi=C\eta$. It follows from \eqref{basiceq} that
	$$A\xi=AC\eta =CB\eta=\lambda C\eta=\lambda\xi.$$
Moreover, as $e^\T_n C=(|C_1|,\ldots,|C_k|)$, we have
	$$e_n^\T \xi=e_n^\T C\eta=(|C_1|,\ldots,|C_k|)\eta\neq 0.$$
	This proves that $\lambda$ is a main eigenvalue of $G$. The proof is complete.
\end{proof}
Now we are ready to prove Theorem \ref{counttree}. Let  $G_k=\mathcal{B}(5,k-3,5,3,\underbrace{2,2,\ldots,2}_{k-5})$, where $k$ is even and $k\ge 6$. Let $\Pi_k=\{C_1,C_2,\ldots,C_k\}$ be a partition $V(G_k)$, where $C_i$ collects all vertices at the $i$-th level of the generalized Bethe tree $G_k$. Clearly, $\Pi_k$ is an equitable partition and the corresponding divisor matrix $B$ is the following tridiagonal matrix:

$$ B=\small{\left(
	\begin{array}{ccccccccc}
	0 &  5 & {} & {} &{} &{}  &{}  &{} & {} \\
	1 & 0 & k-4 & {} &{} & {} & {} & {}& {}\\
	{} & 1 & 0 & 4  & {} &  {}&  {}&{} & {}  \\
	{}  &{} & 1 & 0 & 2 &{}  & {} &{} & {} \\
	{}&{}  & {} & 1 & 0 & 1 &{}  & {}&{}  \\
	{}& {} & {} &{}  & 1 & 0 &\ddots  & {} & {}\\
	{}&{}  & {} & {} & {}&\ddots & \ddots & \ddots & {} \\
    {} & {} & {} & {} &{} & {}&\ddots & \ddots & 1 \\
  {}&{}&{} &{} & {} & {} &{}  & 1 & 0 \\
	\end{array}
	\right)_{k\times k}}.
$$
Let $\xi=(1,-2,-1,2(k-3),\underbrace{-4(k-4),4(k-5),-4(k-6),\ldots,-8,4}_{k-4})^\T$. Note that $k$ is even and  the last $k-4$ entries of $\xi$ have alternating signs. Direct calculation shows that
\begin{equation*}
(2I+B^\T)\xi=\small{\left(
	\begin{array}{ccccccccc}
	2 &  1 & {} & {} &{} &{}  &{}  &{} & {} \\
    5 & 2 & 1 & {} &{} & {} & {} & {}& {}\\
    {} & k-4 & 2 & 1  & {} &  {}&  {}&{} & {}  \\
    {}  &{} & 4 & 2 & 1 &{}  & {} &{} & {} \\
    {}&{}  & {} & 2 & 2 & 1 &{}  & {}&{}  \\
    {}& {} & {} &{}  & 1 & 2 &\ddots  & {} & {}\\
    {}&{}  & {} & {} & {}&\ddots & \ddots & \ddots & {} \\
    {} & {} & {} & {} &{} & {}&\ddots & \ddots & 1 \\
   {}&{}&{} &{} & {} & {} &{}  & 1 & 2 \\
	\end{array}
	\right)}\normalsize{\left(
	\begin{array}{c}
1\\-2\\-1\\2(k-3)\\-4(k-4)\\4(k-5)\\\vdots\\-8\\4
	\end{array}
	\right)=\left(
	\begin{array}{c}
	0\\0\\0\\0\\0\\0\\\vdots\\0\\0
	\end{array}
	\right)},
	\end{equation*}
	that is, $B^\T \xi=-2\xi$.
This indicates that $\lambda=-2$ is an eigenvalue of $B^\T$.
Moreover, as the sum of the last $k-4$ entries of $\xi$ is $-2(k-4)$, we see that
\begin{equation}\label{exi}
e_k^\T \xi=1-2-1+2(k-3)-2(k-4)=0.
\end{equation}
Note that $G_k/\Pi_k$ has exactly $k$ vertices.  We claim that the divisor $G_k/\Pi_k$ has at most $k-1$ main eigenvalues. Otherwise all the $k$ eigenvalues of $G_k/\Pi_k$ must be simple and main. Consider the eigenvalue $\lambda=-2$. Then the associated eigenspace is one-dimensional and is spanned by $\xi$. Thus, by \eqref{exi}, we see that $\lambda=-2$ is not a main eigenvalue, a contradiction.

By Lemma \ref{equalmain}, we know that $G_k$ and $G_k/\Pi_k$ have the same number of main eigenvalues. Thus, $G_k$ has at most $k-1$ main eigenvalues. This completes the proof of Theorem \ref{counttree}.
\section*{Declaration of competing interest}
There is no conflict of interest.
\section*{Acknowledgments}
This work is supported by the	National Natural Science Foundation of China (Grant Nos. 12001006 and 11971406) and the Scientific Research Foundation of Anhui Polytechnic University (Grant No.\,2019YQQ024).

\end{document}